\newtheorem*{Satz}{Satz}
\newtheorem{Lemma}{Lemma}
\theoremstyle{definition}
\newtheorem*{Definition}{Definition}
\theoremstyle{remark}
\newtheorem*{Bemerkung}{Bemerkung}
\newtheorem*{Ubung}{\"{U}bung}
\newcommand{\op}{\operatorname}
\newcommand{\ra}{\rightarrow}
\newcommand{\RA}{\Rightarrow}
\newcommand{\Bl}[1]{{\mathbb{#1}}}
\newcommand{\DR}{\Bl{R}}
\begin{document}
\title{Herleitung von Skalarprodukten aus 
Symmetrieprinzipien}
\author{Wolfgang Soergel}
\maketitle 
 Um den uns umgebenden Raum mathematisch zu modellieren, mag man
  von einem dreidimensionalen reellen affinen Raum 
ausgehen.  Um auch
  das Messen mit Zollst"ocken in dieses Modell zu integrieren, 
mag man annehmen, da"s zus"atzlich 
eine alle Translationen umfassende Untergruppe
  der Automorphismengruppe unseres affinen Raums 
  vorgegeben sei, deren Elemente  ``Bewegungen'' hei"sen m"ogen, 
und von der wir das folgende fordern: F"ur je zwei Paare von Teilmengen 
unseres affinen Raums
  bestehend aus einer Halbebene und einer Halbgerade auf ihrem Rand 
soll es genau
  eine Bewegung geben, die sie ineinander "uberf"uhrt.  
Der im Anschlu"s formulierte Satz und insbesondere sein
Beweis zeigen, wie man von diesen Annahmen ausgehend
im Rahmen einer Grundvorlesung "uber lineare Algebra zur Definition
von L"angeneinheiten und  Skalarprodukten und 
von dort nat"urlich  auf dem
"ublichen Wege  auch zu einem
Beweis des Satzes von Pythagoras gelangen kann.
Ich hebe das deshalb besonders hervor, da bei dem
hier vorgeschlagenen
Zugang der Satz des Pythagoras anders als "ublich
 beim "Ubergang vom Anschauungsraum zu seinem
mathematischen Modell in keiner Weise eingeht.
Als ersten Zugang zum Skalarprodukt w"urde ich 
den hier vorgeschlagenen Weg  dennoch nicht empfehlen.
Ich stelle mir vielmehr vor, da"s er zu einem sp"ateren Zeitpunkt 
als Anwendung und Illustration des Konzepts von Gruppenwirkungen
seinen Platz haben k"onnte, oder auch als Vortragsthema 
in einem Proseminar.
Man mag diese Arbeit als eine Randnotiz zu Kleins Erlanger Programm \cite{KlE}
lesen. Einen sehr viel radikaleren Zugang im ebenen 
Fall entwickelt Bachmann in \cite{BaSp}: Er 
konstruiert die euklidische Ebene aus ihrer
Bewegungsgruppe zusammen mit der
Teilmenge der Spiegelungen, 
also  aus einem Datum bestehend aus
einer Gruppe mit einer ausgezeichneten Teilmenge.
Der hier vorgeschlagene Zugang ist bescheidener. Nach diesen
Vorbemerkungen
fangen wir nun nocheinmal von vorne an, nur diesmal in Formeln und 
ausgehend von 
einem  reellen 
Vektorraum, etwa dem  Richtungsraum unseres affinen Anschauungsraums.

  \begin{Definition}\label{DeDr}
    Unter einem {\bf Strahl} $L$ in einem reellen Vektorraum $V$ verstehen
    wir eine Teilmenge $L\subset V$ mit der Eigenschaft, da"s es einen
    Vektor $v\neq 0$ gibt mit $L=\DR_{\geq 0}v.$   
Unter einer  {\bf Halbebene} $H$ in einem reellen Vektorraum $V$ verstehen
    wir eine Teilmenge $H\subset V$ mit der Eigenschaft, da"s es 
linear unabh"angige 
    Vektoren $v,w\in V$ gibt mit $H=\DR v+\DR_{\geq 0} w.$  
Unter dem {\bf Rand einer Halbebene} verstehen wir die einzige darin 
enthaltene Gerade; f"ur die Halbebene $H=\DR v+\DR_{\geq 0} w$ w"are das also
die Gerade $\DR v.$
Unter einer {\bf Drehgruppe}\index{Drehgruppe} in einem dreidimensionalen
reellen Vektorraum $V$ verstehen wir eine Untergruppe seiner 
Automorphismengruppe 
$$D\subset \op{GL}(V)$$
mit der Eigenschaft, da"s es f"ur je zwei Paare
 von Teilmengen unseres  Vektorraums bestehend aus einer
Halbebene und einem Strahl auf ihrem Rand genau ein Element unserer 
Untergruppe gibt, die  das eine Paar in das andere "uberf"uhrt.
Die Elemente einer solchen Drehgruppe bezeichnen wir dann auch als 
{\bf Drehungen}.\index{Drehung}
\end{Definition}

\begin{Satz}[{\bf Drehgruppen und Skalarprodukte}] 
Gegeben ein dreidimensionaler reeller 
Vektorraum $V,$ liefert die
Abbildung $b\mapsto\op{SO}(V;b)$ eine Bijektion
\begin{displaymath}
\left\{ 
\text{Skalarprodukte auf $V$}
 \right\}/\DR_{>0} 
\;\;\overset{\sim}{\rightarrow} \;\;
\left\{
\text{Drehgruppen $D\subset\op{GL}(V)$}
 \right\}
\end{displaymath} 
\end{Satz}

\begin{Bemerkung}
Nehmen wir unsere Drehgruppe $D$ als abgeschlossen an, so kann man
ihre Kompaktheit aus der Existenz eines kompakten homogenen Raums
folgern und ein invariantes Skalarprodukt durch Integration erhalten.
Das Ziel der folgenden Argumente ist es, einen elementareren Weg 
aufzuzeigen, der bereits im Rahmen der Grundvorlesungen gangbar ist und
zu einem besseren Verst"andnis der
beteiligten Konzepte f"uhren mag, indem er eine 
in beiden Richtungen gangbare Br"ucke
zwischen dem algebraisch besonders einfachen Konzepts eines
Skalarprodukts und dem der Anschauung besonders gut zug"anglichen
 Begriff einer Drehgruppe bereitstellt. 
Aus dem vorhergehenden Satz
folgt insbesondere f"ur jeden dreidimensionalen reellen 
Vektorraum $V$ mit einer ausgezeichneten Drehgruppe die G"ultigkeit 
des pythagoreischen Lehrsatzes in der folgenden Gestalt:
Stehen zwei Vektoren $v,w\in V$ aufeinander senkrecht in dem Sinne, da"s 
es eine Drehung gibt, die den einen festh"alt und den anderen
auf sein Negatives abbildet, und werden die Vektoren $v,w$ und $v+w$
 durch Drehungen auf die Vielfachen $an,$ $bn$ und $cn$ eines 
festen Vektors $n\neq 0$ abgebildet, so gilt
$$a^2+b^2=c^2$$
In der Tat liefert  der vorhergehende Satz 
ein unter unserer
Drehgruppe invariantes Skalarprodukt $b$ mit $b(n,n)=1.$
Aus der eben pr"azisierten Orthogonalit"atsbedingung folgt
$b(v,w)=b(-v,w),$ also $b(v,w)=0.$ Damit erhalten wir dann wie
"ublich $c^2=b(v+w,v+w)=b(v,v)+b(w,w)=a^2+b^2.$
In allen B"uchern zur linearen Algebra, die ich studiert habe,
wird  der Satz des Pythagoras 
in seiner aus der Schule bekannten Gestalt
vorausgesetzt, 
um von dort ausgehend die Br"ucke von der Anschauung zur abstrakten Theorie
euklidischer Vektorr"aume zu schlagen. 
Ich denke jedoch, da"s der Satz des Pythagoras auch eine Diskussion 
und Pr"azisierung 
im Rahmen des Studiums verdient. 
Mir selbst gef"allt die hier
vorgeschlagene Pr"azisierung 
recht gut. Der Beweis gef"allt mir weniger, aber 
ich habe keinen besseren finden k"onnen.
\end{Bemerkung}

\begin{proof}
Den 
Nachweis, da"s f"ur jedes Skalarprodukt $b$ auf $V$ die Gruppe
$\op{SO}(V;b)=\{d\in\op{GL}(V)\mid b(dv,dw)=b(v,w)\;\forall v,w\in V\text{ und }
\op{det}d=1\}$ in der Tat eine Drehgruppe  
ist,
 "uberlasse ich dem Leser  und
beginne gleich mit der Konstruktion der Umkehrabbildung.
Sei also $V$ ein dreidimensionaler reeller Vektorraum und $D\subset \op{GL}(V)$
eine Drehgruppe im Sinne unserer Definition.
Gegeben $v,w\in V,$  vereinbaren wir  die Sprechweise,
$w$ {\bf stehe senkrecht auf} $v$ 
oder auch {\bf sei orthogonal zu} $v$ 
und schreiben $$w\perp v$$
genau dann, wenn es eine Drehung $r\in D$ gibt mit $r(w)=-w$ und $r(v)=v.$ 
  Aus $v \perp w$ folgt leicht $dv\perp dw$
  f"ur jede Drehung $d$ und $\lambda v \perp \mu w$ f"ur alle
  $\mu,\lambda\in\DR.$ Des weiteren steht nur 
der Nullvektor auf sich selber senkrecht.
Um  $(w\perp v)\RA (v\perp w)$ zu zeigen, m"ussen wir  etwas weiter
ausholen. 
Gegeben linear unabh"angige Vektoren  $v,w\in V,$ 
vereinbaren wir 
 f"ur das folgende
die Notation 
$$[v,w]=( \mathbb{R} v
    + \mathbb{R}_{\geq 0} w,\mathbb{R}_{\geq 0} v)$$
f"ur das dadurch bestimmte Paar
aus einer Halbebene nebst einem Strahl auf ihrem Rand.
Unsere Definition einer Drehgruppe besagt in dieser Notation,
da"s es f"ur je zwei Paare $(v,w)$ und $(v',w')$ von 
linear unabh"angigen Vektoren genau ein Element $r$
unserer Drehgruppe gibt mit $r:[v,w]\mapsto [v',w'].$
  \begin{Lemma}\label{Ubh}
    Seien $v,w\in V$ linear unabh"angig.
    \begin{enumerate}
\item
F"ur die Drehung $r$ mit $r:[v,w]\mapsto [-v,w]$ haben wir
      $r^2=\op{id}$ und  $rv=-v.$
\item
F"ur die Drehung $r$ mit $r:[v,w]\mapsto [v,-w]$ haben wir
      $r^2=\op{id}$ und  $rv=v,$ und die einzige weitere Drehung 
$s$ mit $sv=v$ und $s(\DR v+\DR w)=\DR v+\DR w$ ist die Identit"at.
    \item 
F"ur die Drehung $r$ mit $r:[v,w]\mapsto [w,v]$ haben wir
      $r^2=\op{id}$ und es gibt $\lambda>0$ mit $rv=\lambda w$ und $r\lambda
      w=v.$
    \end{enumerate}
\end{Lemma}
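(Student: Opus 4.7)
My strategy is to treat all three items uniformly by exploiting the uniqueness clause in the definition of a Drehgruppe: once I can show that $r^2$ sends the pair $[v,w]$ back to $[v,w]$, then $r^2=\op{id}$ follows immediately, since $\op{id}$ is the unique element of $D$ fixing any given pair. The information about $rv$ then falls out by reading off which ray contains $rv$ from the ray component of $r[v,w]$, writing $rv$ as a scalar multiple of the appropriate vector, and using $r^2v=v$ to pin down the scalar.

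For (1), $r$ leaves the half-plane $\DR v+\DR_{\geq 0}w$ invariant and exchanges the two rays $\DR_{\geq 0}v$ and $\DR_{\leq 0}v$ on its boundary, so applying $r$ twice gives back $[v,w]$; hence $r^2=\op{id}$, and $rv=-\lambda v$ with $\lambda>0$ combined with $\lambda^2 v=r^2v=v$ forces $\lambda=1$. Part (3) is analogous: $r$ carries the ray $\DR_{\geq 0}v$ to $\DR_{\geq 0}w$, so $rv=\lambda w$ for some $\lambda>0$; and since $r$ sends $[w,v]$ back to $[v,w]$, the uniqueness axiom gives $r^2=\op{id}$, whence $r(\lambda w)=r^2v=v$. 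Part (2) follows the same pattern once more: now $r$ fixes the ray $\DR_{\geq 0}v$ but flips the half-plane, iteration returns $[v,w]$ to itself, $r^2=\op{id}$ drops out, and $rv=\mu v$ with $\mu>0$ together with $\mu^2=1$ gives $rv=v$.

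The genuinely new ingredient is the uniqueness statement in (2). I would argue that any $s\in D$ with $sv=v$ and $s(\DR v+\DR w)=\DR v+\DR w$ must permute the two half-planes in that plane bounded by the line $\DR v$, so $s[v,w]$ equals either $[v,w]$ or $[v,-w]$; the Drehgruppe axiom then forces $s\in\{\op{id},r\}$, leaving $\op{id}$ as the only alternative to $r$. I expect this short case analysis, rather than any calculation, to be the main conceptual point of the proof: one has to be confident that those really are the only two half-planes of $\DR v+\DR w$ having $\DR v$ as their boundary, which is elementary linear algebra inside the two-dimensional subspace $\DR v+\DR w$. The remainder of the lemma is bookkeeping with the uniqueness axiom.
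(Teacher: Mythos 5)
Your treatment of parts (1) and (2) is correct and coincides with the paper's argument: in both cases one checks directly that $r^2$ fixes the pair $[v,w]$ (the ray component stays on the line $\DR v$ and returns to $\DR_{\geq 0}v$ after two steps, and the half-plane is preserved because the coefficient of $w$ in $rw$ gets squared), so $r^2=\op{id}$ by the uniqueness axiom, and the value of $rv$ is read off from the ray; your case analysis for the uniqueness statement in (2) is likewise exactly the paper's.

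Part (3), however, contains a genuine gap. You assert that $r$ sends $[w,v]$ back to $[v,w]$, but this is not a consequence of the defining property $r:[v,w]\mapsto[w,v]$; it is essentially the assertion to be proved. Concretely, the hypothesis only yields $rv=\lambda w$ with $\lambda>0$ and $rw=\beta v+\alpha w$ with $\beta>0$, and nothing so far forces $\alpha=0$. Hence $r^2v=\lambda\beta v+\lambda\alpha w$ need not lie on the ray $\DR_{\geq 0}v$, and the uniqueness axiom cannot be applied to the pair $[v,w]$. Note the asymmetry with parts (1) and (2): there the ray of $[v,w]$ is carried to a ray on the same line $\DR v$, so iterating visibly brings it back; in part (3) the ray jumps to the line $\DR w$, and where it goes on the next step is precisely the open question. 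The paper closes this gap with a short eigenvalue argument: in the basis $v,w$ the restriction of $r$ to the plane $\DR v+\DR w$ has a matrix with a zero in the upper left corner and positive antidiagonal entries, hence negative determinant, hence two distinct real eigenvalues with eigenvectors $n$ and $m$; consequently $r^2$ has two positive eigenvalues, so $r^2:[n,m]\mapsto[n,m]$, and the uniqueness axiom applied to the pair $[n,m]$ (rather than to $[v,w]$) gives $r^2=\op{id}$, after which $r(\lambda w)=r^2v=v$ follows. Some argument of this kind is indispensable; the uniform scheme you propose does not cover case (3).
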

\begin{proof}
1. Aus  $r^2:[v,w]\mapsto [v,w]$ folgt $r^2=\op{id},$
so da"s f"ur $r$ nur die Eigenwerte $\pm 1$ in Frage kommen.
Aus $rv\in \DR_{<0}v$ folgt dann $rv=-v.$ 
\\[2mm]
\noindent
2. Aus  $r^2:[v,w]\mapsto [v,w]$ folgt $r^2=\op{id},$
so da"s f"ur $r$ nur die Eigenwerte $\pm 1$ in Frage kommen.
Aus $rv\in \DR_{>0}v$ folgt dann $rv=v.$ F"ur eine weitere Drehung $s$ 
wie oben kommen nur die beiden M"oglichkeiten 
$sw\in \DR v+\DR_{>0}w$ und $sw\in \DR v+\DR_{<0}w$ in Betracht.
Im ersten Fall folgt $s:[v,w]\mapsto [v,w],$ also $s=\op{id},$
im letzteren Fall  $s:[v,w]\mapsto [v,-w],$ also $s=r.$
\\[2mm]
\noindent
3.
  Die Restriktion von $r$ auf die Ebene $\DR v+\DR w$  hat negative
  Determinante, 
da ihre Matrix in der Basis $v,w$ oben links eine Null hat und
  in der Nebendiagonalen positive Eintr"age.  Damit hat unsere Matrix zwei
  verschiedene reelle Eigenwerte und $r^2$ hat zwei positive reelle Eigenwerte,
  etwa mit Eigenvektoren $n$ und $m.$ Wegen $r^2:[n,m]\mapsto[n,m]$ folgt
dann  $r^2=\op{id}$. Der Rest des Lemmas ergibt sich leicht.
\end{proof}

  \begin{Lemma}\label{SeSy}    
Es gilt $w \perp v\;\RA\;v\perp w.$
  \end{Lemma}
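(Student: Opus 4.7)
The plan is to reduce to the case of linearly independent $v, w$ and then use a conjugation argument: if $s \in D$ is an involution that swaps the two rays $\mathbb{R}_{\geq 0}v$ and $\mathbb{R}_{\geq 0}w$ up to positive scaling, then conjugating by $s$ turns a drehung which fixes $v$ and negates $w$ into one which fixes $w$ and negates $v$. Exactly such an involution is supplied by Lemma \ref{Ubh}(3).

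First I would dispose of the degenerate case. Suppose $v, w$ are linearly dependent and $w \perp v$ via some $r \in D$. Writing $w = \mu v$ with $\mu \neq 0$ would give $rw = \mu rv = \mu v = w$, contradicting $rw = -w$ unless $w = 0$. So either $v = 0$ or $w = 0$, and in both subcases the claim $v \perp w$ follows from the scaling property $\lambda v \perp \mu w$ already noted, combined with Lemma \ref{Ubh}(1) applied to any vector linearly independent from the given nonzero one, which produces a drehung negating it.

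For the main case, let $v, w$ be linearly independent and pick $r \in D$ with $rv = v$ and $rw = -w$. By Lemma \ref{Ubh}(3) there exists $s \in D$ with $s^2 = \op{id}$, $sv = \lambda w$ and $sw = \lambda^{-1} v$ for some $\lambda > 0$. The conjugate $t := srs \in D$ is then again a drehung, and a short computation using linearity yields $tv = s(r(\lambda w)) = s(-\lambda w) = -v$ and $tw = s(r(\lambda^{-1}v)) = s(\lambda^{-1}v) = w$. Hence $t$ witnesses $v \perp w$.

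The only nontrivial step is choosing the right conjugator, but once Lemma \ref{Ubh}(3) is recognized as providing an involution which swaps the two distinguished rays, the rest is a three-line verification. I expect the fussiest point to be the degenerate case, because the orthogonality condition becomes partly vacuous there and one has to chase through which drehung actually needs to be exhibited.
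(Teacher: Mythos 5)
Your proof is correct and takes essentially the same route as the paper: the conjugation $t=srs$ by the involution from Lemma \ref{Ubh}(3) is precisely what the paper means by ``folgt unmittelbar aus dem dritten Teil'', and the reduction of the degenerate case to $v=0$ or $w=0$, with Lemma \ref{Ubh}(1) supplying the drehung that negates $v$ when $w=0$, matches as well. One cosmetic remark: in the subcase $v=0$ you need neither the scaling property nor Lemma \ref{Ubh}(1) --- the identity already fixes $w$ and sends $0$ to $-0$, which is exactly the witness the paper points to.
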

     \begin{proof}
      F"ur  linear
      unabh"angige $v,w$ folgt das unmittelbar aus dem 
dritten Teil des vorhergehenden Lemmas \ref{Ubh}.
Gilt $w \perp v$ f"ur linear abh"angige Vektoren, so mu"s mindestens einer 
der Nullvektor sein. Im Fall $v=0$ ist $v\perp w$ offensichtlich, 
bereits die Identit"at h"alt dann 
$w$ fest und bildet  $v$ auf sein Negatives ab.
Es reicht also, wenn wir  $ v \perp 0$ zeigen f"ur
     alle $v\neq 0,$ und das folgt unmittelbar aus dem
ersten Teil des vorhergehenden Lemmas \ref{Ubh}.
  \end{proof}
  \begin{Lemma}\label{WSDh}
   Gegeben ein von Null verschiedener Vektor $n\neq 0,$ bilden die
darauf senkrecht stehenden Vektoren  eine Ebene und es gibt
genau eine Drehung $r_n$ mit $r_nn=n$ und $u\perp n\;\RA\; r_n u=-u.$
  \end{Lemma}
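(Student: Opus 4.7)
The plan is to construct $r_n$ explicitly via Lemma~\ref{Ubh}, part~2. Pick any $w\in V$ linearly independent from $n,$ and let $r\in D$ be the unique rotation with $r\colon[n,w]\mapsto[n,-w].$ Then $r^2=\op{id}$ and $rn=n,$ so $V$ decomposes as $V=V_+\oplus V_-$ into the $\pm 1$-eigenspaces of $r.$ A short computation in the basis $n,w,$ using $r[n,w]=[n,-w]$ together with $r^2=\op{id},$ will yield $rw=\al n-w$ for some $\al\in\DR;$ in particular the vector $u_1:=w-(\al/2)n$ is nonzero and lies in $V_-.$

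The structural heart of the proof is to show $V_+=\DR n,$ so that $P:=V_-$ is two-dimensional. I expect this to be the main obstacle. The argument is by contradiction: if $V_+$ contained a vector $z'$ linearly independent from $n,$ then $r$ would fix $n$ and $z'$ pointwise, and hence preserve both the ray $\DR_{\geq 0}n$ and the halfplane $\DR n+\DR_{\geq 0}z';$ by the simply transitive action of $D$ on such pairs this would force $r=\op{id},$ contradicting $ru_1=-u_1\neq u_1.$ It is precisely at this point that the uniqueness clause in the definition of a rotation group is used in full strength.

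Next I identify $P$ with $n^\perp.$ The inclusion $P\subseteq n^\perp$ is immediate, since $r$ itself witnesses $u\perp n$ for every $u\in P.$ For the converse, let $u\perp n.$ If $u\in\DR n,$ say $u=\lambda n,$ then any witnessing rotation $r_u$ gives $\lambda n=r_u u=-u=-\lambda n,$ so $u=0\in P.$ If $u\notin\DR n,$ I decompose $u=\beta n+p$ with $p\in P$ and compute $ru=\beta n-p=2\beta n-u;$ this shows that $r$ preserves the plane $\DR n+\DR u$ and is not the identity on it. Applying the uniqueness clause of Lemma~\ref{Ubh}, part~2, to the pair $(n,u)$ then identifies $r$ with the witness $r_u$ for $u\perp n,$ so $ru=r_u u=-u$ and $u\in P.$

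Setting $r_n:=r$ thus gives the existence part of the lemma. Uniqueness is immediate: any rotation satisfying the two stated conditions must agree with $r_n$ on $n$ and on $P=n^\perp,$ and since $V=\DR n\oplus P$ it coincides with $r_n$ on all of $V.$
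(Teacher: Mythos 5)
Your proof is correct, but it takes a genuinely different route from the paper's. You start from the prescribed vector $n$, take the involution $r\colon[n,w]\mapsto[n,-w]$ from part~2 of Lemma~\ref{Ubh}, and show that its $(+1)$-eigenspace is exactly $\DR n$ — since a rotation fixing two linearly independent vectors fixes a pair (halfplane, ray) and is therefore the identity by simple transitivity — so the $(-1)$-eigenspace is automatically a plane $P$; the uniqueness clause of part~2 then gives both inclusions between $P$ and $\{u\mid u\perp n\}$. The paper argues in the opposite direction: it starts from a plane $E$ spanned by two orthogonal vectors, forms the quarter-turn $d\colon[v,w]\mapsto[w,-v]$, shows $d^2=-\op{id}$ on $E$ but $d^2\neq-\op{id}$ on $V$ (the determinant of a square is nonnegative), and obtains $n$ as a fixed vector of $d^2$; along the way it needs Lemma~\ref{SeSy} and hence part~3 of Lemma~\ref{Ubh}. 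Your version is leaner: it uses neither the symmetry of $\perp$ nor the determinant argument, and it treats an arbitrary prescribed $n$ directly, whereas the paper's construction produces $n$ as an output and leaves to the reader the small but real step of seeing that every $n\neq 0$ arises this way. What the paper's detour buys is the geometrically appealing description of $r_n$ as the square of a quarter-turn, together with the auxiliary fact that orthogonality pairs up the lines inside each plane. Your closing uniqueness argument (agreement on $\DR n$ and on $n^\perp$, which together span $V$) is exactly what the paper leaves implicit. The one step you should write out is the computation $rw=\al n-w$: from $r\colon[n,w]\mapsto[n,-w]$ one gets $rw=\al n-\ga w$ with $\ga\geq 0$, and $r^2w=w$ forces $\ga=1$ — the same computation the paper performs later when constructing the bilinear form.
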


 \begin{proof}
Gegeben  eine  Gerade $G$ in 
einer  Ebene $ E\subset V,$ gibt es nach dem zweiten Teil
von Lemma \ref{Ubh} 
genau eine Drehung,
die die Gerade $G$ punktweise festh"alt 
und die Ebene $E$ zwar in sich selbst "uberf"uhrt, aber nicht
punktweise festh"alt. Diese Drehung ist dann nat"urlich ihr eigenes Inverses.
Die Menge der auf allen Vektoren von
$G$ senkrecht stehenden Vektoren von $E$ mu"s also
eine Gerade $G'$ sein,
n"amlich der $(-1)$-Eigenraum dieser Drehung geschnitten mit  $E.$ 
Nach Lemma \ref{SeSy} ist die Menge der auf allen Vektoren von
$G'$ senkrecht stehenden Vektoren von $E$ dann wieder 
unsere urspr"ungliche 
Gerade $G.$ 
Gegeben linear unabh"angige Vektoren $v,w\in E$ mit $v\perp w,$ 
hat die Drehung $d$ mit $d:[v,w]\mapsto [w,-v]$
folglich die Eigenschaft $d(w)\in\DR v$ und dann
sogar $d(w)\in\DR_{<0} v,$ und es ergibt sich sofort 
$d: [w,-v]\mapsto [-v, -w],$ also $d^4=\op{id}$.
Wir folgern $d^2v=-v,$ $d^2w=-w$ und damit $d^2u=-u$ f"ur alle
$u\in E.$ Andererseits haben wir $d^2\neq -\op{id},$ etwa da die Determinante
eines Quadrats nie negativ sein kann, folglich hat  $d^2$ einen von Null
verschiedenen Fixvektor $n$ und wir erhalten $E\subset\{u\in V\mid n\perp u\}.$
Hier mu"s sogar Gleichheit gelten, da sonst der Schnitt der
rechten Seite mit einer geeigneten $n$ umfassenden Ebene 
echt gr"o"ser w"are als eine Gerade.
Wir erkennen so, da"s die auf einem vorgegebenen 
von Null verschiedenen Vektor $n\neq 0$ 
senkrechten Vektoren stets eine Ebene bilden, und da"s es 
dazu stets eine Drehung
$r_n$ gibt mit $r_n(n)=n$ und $u\perp n\RA r_n(u)=-u.$ 
\end{proof}

\begin{Ubung}
 Gegeben ein Vektor $n\neq 0,$ gilt f"ur jede 
Drehung $d$ die Identit"at  $r_{dn}=d
  \circ r_n \circ d^{-1},$ und  f"ur jeden von Null verschiedenen Skalar
  $\lambda\in\DR^\times$ haben wir  $r_{\lambda n}=r_n.$
\end{Ubung}

  \begin{Lemma}\label{DLuu}
    Bildet eine Drehung einen Strahl bijektiv auf sich selbst ab, so h"alt
    sie ihn bereits punktweise fest.
\end{Lemma}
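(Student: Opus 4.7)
The plan is to write $dv=\mu v$ with $\mu>0$ and to show $\mu=1$. A key first step is that $d$ preserves the set $E:=\{u\in V:u\perp v\}$, which is a plane by Lemma~\ref{WSDh}: if $u\perp v$, then $du\perp dv=\mu v$ (conjugate a witnessing rotation by $d$), and by the scalar invariance of $\perp$ noted right after Definition~\ref{DeDr} this forces $du\perp v$, so $du\in E$; invertibility of $d$ upgrades this to $d(E)=E$.

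Setting $A:=d|_E$, I would then split into two cases. If $A$ has a real eigenvector $w\in E$ with eigenvalue $\nu$, then $d[v,w]=[\mu v,\nu w]$; for $\nu>0$ this equals $[v,w]$, so $d=\op{id}$ by the simple transitivity of $D$ on pairs and $\mu=1$; for $\nu<0$ it equals $[v,-w]$, so Lemma~\ref{Ubh}.2 identifies $d$ as the unique involution fixing $v$, and again $\mu=1$.

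If $A$ has no real eigenvector, its eigenvalues form a complex conjugate pair. Since $d$ preserves $\perp$, so does $A$ on $E$, and in dimension two this similarity condition forces $A$ to be a positive scalar multiple of an isometry of the bilinear form $q$ on $E$ underlying $\perp|_E$. I would now pick any $u\in E\setminus\{0\}$ together with a nonzero $u_\perp\in E$ satisfying $u_\perp\perp u$ (such $u_\perp$ exists because $u^\perp$ is a plane by Lemma~\ref{WSDh}, which meets $E$ in a line). Writing the matrix of $A$ in the basis $(u,u_\perp)$ of $E$ as $\bigl(\begin{smallmatrix}\alpha&\gamma\\\beta&\delta\end{smallmatrix}\bigr)$, the conformality constraint forces $\alpha=\delta$, and the complex-eigenvalue hypothesis forces $\beta\gamma<0$, whence $\alpha^2-\beta\gamma>0$.

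The endgame is a direct computation of $(d\,r_u)^2$ in the basis $(v,u,u_\perp)$, using that $r_u\in D$ from Lemma~\ref{WSDh} fixes $u$ and negates $u^\perp$, so negates both $v$ and $u_\perp$. The resulting matrix is diagonal, sending $v\mapsto\mu^2 v$ and both $u$ and $u_\perp$ to $(\alpha^2-\beta\gamma)$ times themselves; the off-diagonal cross terms on the $E$-block cancel precisely because $\alpha=\delta$. Hence $(d\,r_u)^2$ fixes the pair $[v,u]$, and by the simple transitivity of $D$ on pairs it must be $\op{id}_V$; the $v$-entry now reads off $\mu^2=1$, so $\mu=1$. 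The hardest step is the two-dimensional similarity argument yielding $\alpha=\delta$ in every $q$-perpendicular basis of $E$; it requires a short direct verification (for instance, by examining the images of $u\pm u_\perp$ under $A$ and exploiting that they are again $q$-perpendicular).
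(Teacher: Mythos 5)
Your reduction to showing $\mu=1$ for $dv=\mu v$, the invariance $d(E)=E$ for $E=v^{\perp}$, and your Case A (a real eigenvector $w$ of $d|_E$, settled by simple transitivity on pairs together with Lemma~\ref{Ubh}.2) are all correct, and the endgame computation of $(d\,r_u)^2$ would indeed finish the proof \emph{if} $\alpha=\delta$. The gap is exactly there. At this point of the development nothing guarantees that the relation $\perp$ restricted to $E$ is induced by a symmetric bilinear form, so the phrase ``the bilinear form $q$ on $E$ underlying $\perp|_E$'' has no referent, and the inference ``$A$ preserves $\perp|_E$, hence $A$ is a positive multiple of a $q$-isometry, hence $\alpha=\delta$'' is not available. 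All that is known about $\perp$ before this lemma is: symmetry (Lemma~\ref{SeSy}), invariance under $D$ and under rescaling, that only $0$ is self-perpendicular, and that $n^{\perp}$ is a plane (Lemma~\ref{WSDh}). Your proposed verification via $A(u+u_\perp)\perp A(u-u_\perp)$ founders on the same rock twice: it presupposes $q$ again, and even granting a $q$, the premise $u+u_\perp\perp u-u_\perp$ would require $q(u,u)=q(u_\perp,u_\perp)$, i.e.\ that $u$ and $u_\perp$ have ``equal length'' --- a notion that only becomes available \emph{after} this lemma, which is needed to define the L\"angeneinheiten and the norm, and from there the scalar product. So Case B, the genuinely hard case, is circular as written.

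The paper closes this case without any bilinear form: choose $w\perp v$; if $dw\in\DR w$ one is in your Case A, and otherwise $w$ and $dw$ span $E$, so Lemma~\ref{Ubh}.3 applied to the pair $(w,dw)$ yields $\lambda>0$ and an involution $r\in D$ swapping $\lambda w$ and $dw$; one checks $rv=-v$, whence $r\,r_w$ and $d$ both carry $[v,w]$ to $[v,dw]$, so $d=r\,r_w$ by simple transitivity and $dv=r(r_wv)=r(-v)=v$. In other words, $d$ is exhibited as a product of two involutions each negating $v$. Replacing your Case B by this argument (which also subsumes Case A) repairs the proof.
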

\begin{Bemerkung}
Dies Lemma formalisiert die Erfahrungstatsache, da"s eine
Achse beim Drehen ihre L"ange nicht "andert, 
und es mag
l"acherlich wirken, das beweisen zu wollen.
In der Tat h"atten wir 
diese Aussage  auch
als zus"atzliche Bedingung zu unserer Definition  
des Begriffs einer Drehgruppe mit hinzunehmen k"onnen.
Da"s ich das nicht getan habe, hat rein "asthetische Gr"unde: 
Wir k"onnen so eine gr"o"sere Wegstrecke mit reiner Logik 
zur"ucklegen. 
\end{Bemerkung}
\begin{proof}
Es gilt f"ur $u\neq 0$ und jede Drehung $d\in D$ 
  zu zeigen
  $$d(\DR_{\geq 0} u)=\DR_{\geq 0} u\;\RA\; du=u.$$
Die Idee des Beweises ist rasch erkl"art: Wir schreiben unsere Drehung
als die Komposition von zwei Drehungen, die jeweils $u$ 
auf sein Negatives abbilden.
  Dazu  w"ahlen
  wir $v\neq 0$ mit $v\perp u.$ Gilt $dv\in \DR v,$ so folgt $d^2 v\in
  \DR_{>0}v$ und damit $d^2:[u,v]\mapsto [u,v]$ und so $d^2=\op{id}$ und dann
  $du=u.$ Sonst spannen $v$ und $dv$ die zu $u$ senkrechte Ebene auf.  Nach
dem letzten Teil von Lemma  
\ref{Ubh} gibt es $\lambda>0$ und eine Drehung $r,$ die $\lambda v$ mit $dv$
  vertauscht und deren Quadrat die Identit"at ist. Daraus folgt leicht  $ru=-u.$
  F"ur die Verkn"upfung $r r_v$ gilt dann $\lambda v\mapsto dv$ und $u\mapsto
  u,$ woraus folgt $r r_v: [u,v]\mapsto [u,dv],$ also $r r_v=d,$ und damit dann
  $du=u$ wie gew"unscht.
\end{proof}
\begin{Lemma}
Jede Bahn einer Drehgruppe trifft jeden Strahl in genau einem Punkt. 
\end{Lemma}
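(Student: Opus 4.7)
The plan is to split the claim into existence (the orbit $Dv$ meets the ray $\DR_{\geq 0} u$) and uniqueness (it does so in only one point), handling the trivial case $v=0$ separately: there the orbit is $\{0\}$, which lies on every ray.

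For existence with $v\neq 0$, I would invoke the defining axiom of a rotation group applied to the bracket notation introduced above. Pick any $w$ linearly independent of $v$ and any $w'$ linearly independent of $u$. The axiom furnishes a unique $d\in D$ with $d:[v,w]\mapsto[u,w']$; unpacking the definition of $[\cdot,\cdot]$, this means in particular $d(\DR_{\geq 0}v)=\DR_{\geq 0}u$, so $dv=\lambda u$ for some $\lambda>0$, exhibiting a point of $Dv$ on the ray.

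For uniqueness, suppose $dv$ and $d'v$ both lie on $\DR_{\geq 0}u$. Since $v\neq 0$ and rotations are invertible, both images are nonzero, say $dv=\lambda u$ and $d'v=\mu u$ with $\lambda,\mu>0$. The composite $g=d'\circ d^{-1}\in D$ then sends $u$ to $(\mu/\lambda)u$, and the same computation applied to $g^{-1}$ shows that $g$ maps $\DR_{\geq 0}u$ bijectively onto itself. Lemma \ref{DLuu} now forces $g(u)=u$, i.e.\ $\mu=\lambda$, and hence $dv=d'v$.

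I do not anticipate any real obstacle. The existence part is essentially a verbatim transcription of the rotation group axiom, and the uniqueness part reduces cleanly to Lemma \ref{DLuu}. The one spot requiring slight care is verifying that $g$ maps the ray \emph{bijectively} (and not merely into) itself, which is what triggers Lemma \ref{DLuu}; this boils down to the strict positivity of both $\lambda$ and $\mu$, which in turn comes from $v\neq 0$ and the invertibility of rotations.
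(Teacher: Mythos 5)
Your argument is correct and coincides with the paper's own proof, which likewise derives existence directly from the defining axiom of a Drehgruppe and uniqueness from Lemma \ref{DLuu}; you have merely written out the details (the reduction of uniqueness to a composite $d'\circ d^{-1}$ stabilizing the ray) that the paper leaves implicit.
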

\begin{proof}
Da"s jede Bahn jeden Strahl in h"ochstens einem Punkt trifft, folgt sofort aus
Lemma
\ref{DLuu}. Da"s jede Bahn jeden Strahl in mindestens einem Punkt trifft,
folgt unmittelbar aus unserer Definition einer Drehgruppe.
\end{proof}
\begin{Definition}
Eine $D$-Bahn $l\subset (V\setminus 0)$ nennen wir auch eine
{\bf L"angeneinheit}.
Gegeben eine L"angeneinheit $l$ erkl"aren wir 
die zugeh"orige Norm
  $$
\begin{array}{ccl}
V&\ra&\DR_{\geq 0}\\
v&\mapsto&\|v\|_l=\|v\|
\end{array}
$$
durch die Vorschrift $\|\lambda v\|=\lambda$ f"ur alle $\lambda\geq 0$
und $v\in l.$ 
\end{Definition}
\noindent
Nach diesen Vorbereitungen 
machen wir uns nun an den eigentlichen Beweis des Satzes und
konstruieren  ein Skalarprodukt.
Gegeben linear unabh"angige $v,w,$  gilt f"ur unser $r_v$ aus Lemma
\ref{WSDh} sicher
$r_v w=\alpha v-\gamma w$ mit $\gamma\geq 0.$ Wegen $r_v^2w=\alpha v-\alpha
\gamma v+\gamma^2w=w$ folgt 
$\gamma=1.$ Gegeben ein festes $v\neq 0,$  gibt es folglich f"ur alle $w\in V$ 
genau eine reelle Zahl $\alpha_v(w)$
mit der Eigenschaft
$$r_vw+w=\alpha_v(w)v.$$ Man erkennt unschwer, da"s 
$\alpha_v$  eine Linearform auf  
$V$ ist.
Wir k"onnen $\alpha_v$ auch charakterisieren als die eindeutig bestimmte
Linearform, die auf $v$ den Wert $2$ annimmt und auf allen zu $v$ 
senkrechten Vektoren den Wert Null.
Unsere Definitionen liefern  f"ur jede weitere Drehung $d$ 
die Identit"at $\alpha_{dv}\circ d =\alpha_v$ 
alias $\alpha_{dv} =\alpha_v\circ d^{-1}$
und f"ur jeden von Null verschiedenen Skalar $\lambda\in \DR^\times$
die Identit"at $\alpha_{\lambda v} =\lambda^{-1} \alpha_v.$
Werden zwei von Null verschiedene Vektoren $v,w$ 
durch eine Drehung miteinander vertauscht,  so gilt f"ur unsere
Ausdr"ucke weiter  die Identit"at 
$\alpha_v (w)= \alpha_w(v).$ In der Tat, aus $rv=w$ und $rw=v$ folgt
$r_w r=r r_v,$ und die von der Mitte ausgehend 
zu entwickelnde Gleichungskette 
$$\alpha_w(v)w-v=r_w(v)=r_w r w=r r_v w=r(\alpha_v(w)v-w)=\alpha_v(w)w-v$$
liefert dann die Behauptung.
Nun w"ahlen wir eine L"angeneinheit $l$ und erkl"aren eine
Abbildung
$b=b_l: V\times V\ra\DR$ durch die Vorschrift
$$b(v,w)=\left\{\begin{array}{cl} \|v\|^{2}\alpha_v(w)/2 & v\neq 0;\\ 0 & v=0. 
\end{array}\right.$$
Offensichtlich gilt  $\|v\|^{2}=b(v,v),$ und
$w\mapsto b(v,w)$ ist linear f"ur alle $v.$ 
Schlie"slich beachten wir, da"s f"ur je zwei von Null verschiedene 
Vektoren $v,w\in V$ 
die Vektoren $\|v\|^{-1}v$ und $\|w\|^{-1}w$ durch eine Drehung 
miteinander vertauscht werden. 
Nach dem vorhergehenden  folgt 
$\|v\|\|w\|^{-1}\alpha_v(w)=\|w\|\|v\|^{-1}\alpha_w(v)$ alias 
$\|v\|^{2}\alpha_v(w)=\|w\|^{2}\alpha_w(v)$ und damit 
$b(v,w)=b(w,v)$ erst f"ur je zwei von Null verschiedene 
Vektoren, aber dann auch sofort f"ur alle $v,w\in V.$
Folglich ist $b$ ein Skalarprodukt
auf $V,$ und wir haben wie versprochen eine Abbildung in die Gegenrichtung
konstruiert. Da"s unsere beiden Abbildungen 
in der Tat zueinander invers sind, mag der Leser selbst pr"ufen. 
\end{proof}

\end{document}